\newtheorem{dfi}{Definition}[section]
\newtheorem{pro}[dfi]{Proposition}
\newtheorem{thm}[dfi]{Theorem}
\newtheorem{cor}[dfi]{Corollary}
\newtheorem{rem}[dfi]{Remark}
\newtheorem{ex}[dfi]{Example}
\newcommand{\inc}{\hookrightarrow}
\begin{document}
\setlength{\baselineskip}{16pt}

\title[Nonlinear subsets of function spaces and spaceability]{Nonlinear subsets of function spaces and spaceability}
\author[C\'esar Ruiz and V\'\i ctor M. S\'anchez]{C\'esar Ruiz$^*$ and V\'\i ctor M. S\'anchez$^*$}
\address[C\'esar Ruiz]{Department of Mathematical Analysis, Faculty of Mathematics, Complutense University of Madrid, 28040 Madrid, Spain.}
\email{Cesar\_Ruiz@mat.ucm.es}
\address[V\'\i ctor M. S\'anchez]{Institute for Interdisciplinary Mathematics, Department of Mathematical Analysis, Faculty of Mathematics, Complutense University of Madrid, 28040 Madrid, Spain.}
\email{victorms@mat.ucm.es}
\thanks{$^*$Partially supported by the Spanish Ministry of Economy and Competitiveness, grant MTM2012-31286.}
\begin{abstract}
In this paper, we study the existence of infinite dimensional closed linear subspaces of a rearrangement invariant space on $[0,1]$ every nonzero element of which does not belong to any included rearrangement invariant space of the same class such that the inclusion operator is disjointly strictly singular. We consider Lorentz, Marcinkiewicz and Orlicz spaces. The answer is affirmative for Marcinkiewicz spaces and negative for Lorentz and Orlicz spaces. Also, the same problem is studied for Nakano spaces assuming different hypothesis.
\end{abstract}
\keywords{spaceability, Lorentz space, Marcinkiewicz space, Orlicz space, Nakano space, disjointly strictly singular}
\subjclass[2000]{46E30}
\maketitle

\section{Introduction}

In the last years the study of large algebraic structures (vector spaces, among others) of functions enjoying certain special properties has become a trend in mathematical analysis. Given a topological vector space $X$, we say that a subset $M\subset X$ is {\em spaceable} if $M\cup\{0\}$ contains an infinite-dimensional closed linear subspace. The subset $M$ will be called {\em lineable} if $M\cup\{0\}$ contains an infinite-dimensional linear subspace (not necessarily closed).

The terms ``lineability" and ``spaceability" were originally coined by V.I. Gurariy and they first appeared in \cite{AGS}. After this first appearance of the notion, many authors became interested in it. See, for instance, the recent works \cite{AGPS,APS,AS,BG,CS,EGS,GQ}, just to cite some. Prior to the publication of \cite{AGS} some authors (when working with infinite-dimensional spaces) already found large algebraic structures enjoying special properties (even though they did not explicitly used terms like lineability or spaceability). Probably the very first result illustrating this was due to B. Levine and D. Milman (\cite{LM}). They proved that the subset of $C[0,1]$ of all functions of bounded variation is not spaceable. Later, V.I. Gurariy proved in \cite{G1} that the set of everywhere differentiable functions on $[0,1]$ is not spaceable, and that there exist infinite-dimensional closed subspaces of $C[0,1]$ all of whose members are differentiable on $(0,1)$.

Within the context of subsets of continuous functions, V.I. Gurariy showed in \cite{G2} that the set of continuous nowhere differentiable functions on $[0,1]$ is lineable. Soon after, V.P. Fonf, V.I. Gurariy and M.I. Kadets showed in \cite{FGK} that the set of continuous nowhere differentiable functions on $[0,1]$ is spaceable in $C[0,1]$. Actually, much more is known about this set. L. Rodr\'\i guez-Piazza showed in \cite{R} that the space constructed in \cite{FGK} can be chosen to be isometrically isomorphic to any separable Banach space. More recently, S. Hencl showed in \cite{Hen} that any separable Banach space is isometrically isomorphic to a subspace of $C[0,1]$ whose non-zero elements are nowhere approximately differentiable and nowhere H\"older. Another set that has also attracted the attention of some authors is the set of differentiable nowhere monotone functions on $\mathbb R$, which was proved to be lineable (see, e.g., \cite{AGS,GMSS}).

We refer the interested reader to a recent survey on the topic (\cite{BPS}), where many more examples can be found and techniques are developed in several different frameworks.

The study of subspaces of $L^p$-spaces is a classical topic in Banach and quasi-Banach space theory, which is related to spaceability (see, for instance, \cite{BO,BCFPS}). Recently, it was proved in \cite{BFDS} that the set $L^p[0,1]\setminus\bigcup\limits_{q>p}L^q[0,1]$ is spaceable for every $p>0$. This problem can be considered for other function spaces. In this paper the problem is studied and solved for Lorentz, Marcinkiewicz and Orlicz spaces on $[0,1]$, which are rearrangement invariant spaces. Since the continuous inclusion operator $L^q\inc L^p$ is disjointly strictly singular for $1\le p<q<\infty$, that is, there is no disjoint sequence of non-null functions such that the closed subspace spanned by them in both spaces are isomorphic, the natural question involves studying if it is spaceable the set formed by the functions belonging to a Lorentz (Marcinkiewicz, Orlicz) space and not belonging to any included Lorentz (Marcinkiewicz, Orlicz) space such that the inclusion operator is disjointly strictly singular. We have spaceability for Marcinkiewicz spaces and not for Lorentz and Orlicz spaces. We obtain these results proving previously a general one on spaceability in rearrangement invariant spaces (Theorem \ref1). For Orlicz spaces we also obtain affirmative results considering only a sequence of spaces, and with enough separation between the functions that generate the spaces through using indices of such functions.

On the other hand, and as a kind of generalization of the original result in \cite{BFDS}, we study if it is spaceable the set formed by the functions belonging to a Nakano function space and not belonging to any included Nakano function space, assuming different hypothesis because a characterization of disjointly strictly singular inclusion operators between Nakano function spaces is still unknown.

\section{Preliminaries}

Let us give some definitions and notations. We consider the interval $[0,1]$ and the Lebesgue measure $\lambda$. The {\em distribution function} $\lambda_x$ associated to a measurable function $x$ on $[0,1]$ is defined by
	$$\lambda_x(s)=\lambda\{t\in[0,1]:|x(t)|>s\}.$$
Related to it we have the {\em decreasing rearrangement function} $x^*$ of $x$ which is defined by
	$$x^*(t)=\inf\{s\in[0,\infty):\lambda_x(s)\le t\}.$$

A Banach space $E$ of measurable functions defined on $[0,1]$ is said to be a {\em rearrangement invariant space} (briefly r.i. space) if the following conditions are satisfied:
\begin{enumerate}
\item If $y\in E$ and $|x(t)|\le|y(t)|$ $\lambda$-a.e. on $[0,1]$, then $x\in E$ and $\|x\|_E\le\|y\|_E$.
\item If $y\in E$ and $\lambda_x=\lambda_y$, then $x\in E$ and $\|x\|_E=\|y\|_E$.
\end{enumerate}

Important examples of r.i. spaces are Lorentz, Marcinkiewicz and Orlicz spaces with the corresponding norms.

Let us denote by $\Phi$ the class of all increasing concave functions $\phi$ on $[0,1]$ such that $\phi(0)=0$. If $\phi\in\Phi$, the {\em Lorentz space} $\Lambda(\phi)$ consists of all measurable functions $x$ defined on $[0,1]$ such that
	$$\|x\|_{\Lambda(\phi)}=\int_0^1x^*(t)\ d\phi(t)<\infty.$$
And the {\em Marcinkiewicz space} $M(\phi)$ consists of all measurable functions $x$ defined on $[0,1]$ for which
	$$\|x\|_{M(\phi)}=\sup_{0<t\le1}\frac{\int_0^tx^*(s)\ ds}{\phi(t)}<\infty.$$

The {\em fundamental function} $\phi_E$ of an r.i. space $E$ is defined by $\phi_E(t)=\left\|\chi_{[0,t]}\right\|_E$ with $0\le t\le1$. Given $\phi\in\Phi$, the spaces $\Lambda(\phi)$ and $M(\tilde{\phi})$ are respectively the smallest and the biggest r.i. space having the same fundamental function $\phi$, being $\tilde{\phi}(t)=\frac t{\phi(t)}$, for $0<t\le1$ and $\tilde{\phi}(0)=0$.

Now, let us denote by $\Psi$ the class of all positive convex functions $\psi$ on $[0,\infty)$ such that $\psi(0)=0$ and $\lim\limits_{t\to\infty}\psi(t)=\infty$. If $\psi\in\Psi$, the {\em Orlicz space} $L^\psi$ consists of all measurable functions $x$ defined on $[0,1]$ for which
	$$\|x\|_{L^\psi}=\inf\left\{s>0:\int_0^1\psi\left(\frac{|x(t)|}s\right)\ dt\le1\right\}<\infty.$$

We will say that $\psi\in\Psi$ verifies the {\em$\Delta_2$-condition at $\infty$} if
	$$\limsup_{t\to\infty}\frac{\psi(2t)}{\psi(t)}<\infty.$$

And we define the indices $1\le p_\psi^\infty\le q_\psi^\infty\le\infty$ by
	$$p_\psi^\infty=\sup\left\{p>0:\frac{\psi(t)}{t^p}\mbox{ is increasing for large }t\right\}$$
and
	$$q_\psi^\infty=\inf\left\{q>0:\frac{\psi(t)}{t^q}\mbox{ is decreasing for large }t\right\}.$$

For properties of r.i. spaces we refer to \cite{BS,KPS,LT}.

Now, given a measurable function $p:[0,1]\longrightarrow[1,\infty)$, we write
	$$p^-=\mbox{ess inf}\{p(t):t\in[0,1]\}$$
and
	$$p^+=\mbox{ess sup}\{p(t):t\in[0,1]\}.$$

We consider {\em Nakano function spaces} (or {\em variable exponent Lebesgue spaces}) $L^{p(\cdot)}$, the set of all measurable functions $x$ defined on $[0,1]$ such that
	$$\rho_{p(\cdot)}(x)=\int_0^1|x(t)|^{p(t)}\ dt<\infty.$$
And
	$$\|x\|_{p(\cdot)}=\inf\left\{s>0:\rho_{p(\cdot)}\left(\frac xs\right)\le1\right\}$$
is the Luxemburg norm associated to $L^{p(\cdot)}$.

With the usual pointwise order, $\left(L^{p(\cdot)},\|\cdot\|_{p(\cdot)}\right)$ is a Banach lattice. We will assume that $p^+<\infty$. In this case it is a separable order continuous Banach lattice.

The inclusion $L^{q(\cdot)}\inc L^{p(\cdot)}$ holds if and only if $p(t)\le q(t)$ a.e.

The {\em essential range} of the exponent function $p$ is defined by
	$$R_{p(\cdot)}=\{q\in[1,\infty):\lambda(p^{-1}(q-\epsilon,q+\epsilon))>0\ \forall\epsilon>0\}.$$
The essential range is a lattice-isomorphic invariant of Nakano function spaces (see \cite{HRu1}).

Given two sequences $(p_n)_{n\in\mathbb N}\subset[1,\infty)$ and $(w_n)_{n\in\mathbb N}\subset(0,\infty)$, the {\em weighted Nakano sequence
space} is defined by
	$$\ell_{(p_n)}(w_n)=\left\{x=(x_n)_{n\in\mathbb N}\subset\mathbb R:\sum_{n=1}^\infty\left|\frac{x_n}s\right|^{p_n}w_n<\infty\mbox{ for some }s>0\right\}$$
equipped with the Luxemburg norm
	$$\|x\|_{\ell_{(p_n)}(w_n)}=\inf\left\{s>0:\sum_{n=1}^\infty\left|\frac{x_n}s\right|^{p_n}w_n\le1\right\}.$$
In particular, if $p_n=p$ for every $n\in\mathbb N$, we have the spaces $\ell_p(w_n)$. And when $w_n=1$ for every $n\in\mathbb N$, we write $\ell_{(p_n)}$, the classical Nakano sequence space.

For properties of Nakano spaces we refer to \cite{DHHR,HRu1,M,Na,Ne}.

A linear operator between two Banach spaces $X$ and $Y$ is {\em strictly singular} if it fails to be an isomorphism on any infinite dimensional subspace of $X$. A weaker notion for Banach lattices introduced in \cite{HRo1} is the following one: an operator $T$ from a Banach lattice $X$ to a  Banach space $Y$ is said to be {\em disjointly strictly singular} (DSS in short) if there is no disjoint sequence of non-null vectors in $X$ such that the restriction of $T$ to the closed subspace spanned by them is an isomorphism. The notion of DSS has turned out to be a useful tool in the study of lattice structure of function spaces. See, for instance, \cite{Her,HRo1,HRo2}.

If $\phi,\psi\in\Phi$, we will say that $\phi\ll\psi$ if $\lim\limits_{t\to0}\frac{\phi(t)}{\psi(t)}=0$. In the case of Lorentz spaces $\Lambda(\phi)\inc\Lambda(\psi)$ holds if and only if there exists $C\in\mathbb R$ such that $\psi\le C\phi$. And the inclusion operator is DSS if and only if $\psi\ll\phi$. For Marcinkiewicz spaces the results are analogous: $M(\phi)\inc M(\psi)$ holds if and only if there exists $C\in\mathbb R$ such that $\phi\le C\psi$. And the inclusion operator is DSS if and only if $\phi\ll\psi$ (see \cite{A}).

For Orlicz spaces, if $\phi,\psi\in\Psi$, $L^\phi\inc L^\psi$ holds if and only if there exist $c>0$ and $T\ge0$ such that $\psi(t)\le c\,\phi(t)$ for every $t\ge T$. And in the case of separable Orlicz spaces, that is, verifying $\phi$ and $\psi$ the $\Delta_2$-condition at $\infty$, the inclusion operator is DSS if and only for every $C>0$ there exist distinct points $x_1,\ldots,x_n\in[1,\infty)$ and $a_1,\ldots,a_n\in(0,\infty)$ such that
	$$\sum_{k=1}^na_k\psi(tx_k)\le C\sum_{k=1}^na_k\phi(tx_k)$$
for all $t\ge1$ (see \cite{HRo1}). We will say that $\psi\prec\phi$ if the previous conditions are satisfied.

\section{Spaceability in r.i. spaces}

Given $a\in[0,1)$, $r\in(0,1-a]$ and a measurable function $x$ defined on $[0,1]$, we define the linear operator $T_{a,r}(x(t))=x\left(\frac{t-a}r\right)\chi_{(a,a+r]}(t)$, which is bounded from $L^\infty$ to $L^\infty$ and from $L^1$ to $L^1$. Thus, using Calder\'on-Mitjagin interpolation theorem (\cite[Theorem 2.a.10]{LT}), $T_{a,r}$ is bounded from $E$ to $E$ for every r.i. space $E$.

Also, we will say that a subset $A$ of an r.i. space $E$ is a {\em$T$-subset} if the following conditions are satisfied:
\begin{enumerate}
\item If $y\in A$ and $|x(t)|\le|y(t)|$ $\lambda$-a.e. on $[0,1]$, then $x\in A$.
\item If there exist $a\in[0,1)$ and $r\in(0,1-a]$ such that $T_{a,r}(x)\in A$, then $x\in A$.
\end{enumerate}

\begin{thm}
\label1
Let $E$ be an r.i. space and $A\subseteq E$ a $T$-subset. The following statements are equivalent:
\begin{enumerate}
\item$E\setminus A$ is spaceable.
\item$E\setminus A\neq\emptyset$.
\end{enumerate}
\end{thm}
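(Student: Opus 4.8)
The implication $(1)\Rightarrow(2)$ is trivial, an infinite-dimensional subspace being nonzero. For $(2)\Rightarrow(1)$ the plan is to fix $x_0\in E\setminus A$ and build, out of disjointly supported ``packed copies'' of $x_0$, an infinite-dimensional closed subspace $V$ with $V\cap A=\{0\}$. First I normalize: solidity gives $x_0\in A\iff|x_0|\in A$, so assume $x_0\ge0$, and if $x_0=0$ then either $A=\emptyset$ (so $E\setminus A=E$ is trivially spaceable) or, by solidity applied to any member of $A$, $0\in A$, contradicting $x_0\notin A$; thus $0\ne x_0\ge0$.

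Next, split $[0,1]$ into countably many intervals $\{J_{i,k}\}_{i,k\ge1}$ of positive length, write $T_J:=T_{a,r}$ for $J=(a,a+r]$, and set $u_i:=\sum_{k\ge1}T_{J_{i,k}}(k\,x_0)$. Multiplying the $k$-th block by $k$ is what will later defeat arbitrarily small scalars, which is needed because a $T$-subset need not be a linear subspace. Each $T_{a,r}$ is bounded on $E$ and, moreover, $\|T_{a,r}(x_0)\|_E\to0$ as $r\to0^+$; so I choose $|J_{i,k}|$ shrinking fast enough in $k$ that $\sum_k\|T_{J_{i,k}}(k\,x_0)\|_E<\infty$, whence (disjoint supports) $0\ne u_i\in E$. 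Put $e_i:=u_i/\|u_i\|_E$ and $V:=\overline{\mathrm{span}}\{e_i:i\ge1\}$. The $e_i$ are disjointly supported unit vectors, so $\dim V=\infty$; the band projections $Q_ix:=x\,\chi_{\mathrm{supp}\,u_i}$ have norm one by solidity, satisfy $Q_ie_j=\delta_{ij}e_i$, and hence map $V$ into $\mathbb{R}e_i$; and $x=0$ precisely when $Q_ix=0$ for all $i$ (if so, $x$ is a limit of functions supported on $S:=\bigcup_i\mathrm{supp}\,u_i$, hence vanishes off $S$, and on $S$ it equals $\sum_iQ_ix=0$ pointwise).

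To finish, take $y\in V\setminus\{0\}$, pick $m$ with $Q_my=c_me_m\ne0$, and suppose $y\in A$. For every $k$, $|c_me_m\chi_{J_{m,k}}|\le|Q_my|\le|y|$, so solidity gives $c_me_m\chi_{J_{m,k}}\in A$; but $c_me_m\chi_{J_{m,k}}=\|u_m\|_E^{-1}c_m\,T_{J_{m,k}}(k\,x_0)=T_{J_{m,k}}\!\bigl(\|u_m\|_E^{-1}c_mk\,x_0\bigr)$, so the ``pull-back'' property of a $T$-subset forces $\|u_m\|_E^{-1}c_mk\,x_0\in A$ for every $k\ge1$. Picking $k$ with $\|u_m\|_E^{-1}|c_m|k\ge1$ and using solidity once more gives $x_0\in A$, a contradiction. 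Hence $V\setminus\{0\}\subseteq E\setminus A$, so $E\setminus A$ is spaceable.

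The only non-bookkeeping step, and the one I expect to be the main obstacle, is $\|T_{a,r}(x_0)\|_E\to0$ as $r\to0^+$. I would prove it via $x_0^*=x_0^*\chi_{(0,\delta)}+x_0^*\chi_{[\delta,1]}$: the decreasing rearrangement of $T_{a,r}(x_0)$ equals $T_{0,r}(x_0^*)$, the operator $T_{0,r}$ is a contraction on $E$ (Calder\'on-Mitjagin, as used for $T_{a,r}$), $\|x_0^*\chi_{(0,\delta)}\|_E\to0$ as $\delta\to0$ by absolute continuity of the norm, and $T_{0,r}(x_0^*\chi_{[\delta,1]})$ is dominated by $x_0^*(\delta)\chi_{(0,r]}$, of norm $x_0^*(\delta)\phi_E(r)\to0$ as $r\to0$; fixing $\delta$ small and then $r$ small gives the estimate (this is the one place a property of $E$ beyond being an r.i.\ space is used). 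Note finally that in every concrete instance in the paper $A$ is a union of linear subspaces, hence scalar-stable, and then this technicality disappears: one takes $u_i=T_{J_i}(x_0)$ for a partition $[0,1]=\bigsqcup_iJ_i$, drops the multipliers $k$, and the same argument goes through.
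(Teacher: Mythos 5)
Your overall strategy is the same as the paper's: take disjointly supported images $T_{a,r}(x_0)$ of a single witness $x_0\in E\setminus A$, pass to the closed span, and use solidity together with the pull-back property of a $T$-subset to keep nonzero elements out of $A$. The one thing you add is the unbounded multipliers $k$ inside each block $u_i$, and you are right about why something like this is needed: the definition of a $T$-subset only yields $x_0\in A$ from $\lambda x_0\in A$ when $|\lambda|\ge 1$, whereas the paper's argument, read literally, only produces $\lambda_i x_0\in A$ for the (possibly small) coefficient $\lambda_i$ of a generic element of the span. So you have put your finger on a genuine imprecision that the published proof glosses over, and your endgame (solidity, then pull-back along $T_{J_{m,k}}$, then solidity again after choosing $k$ large) is clean.

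The gap is precisely in the device you introduce to fix this. The lemma $\|T_{a,r}(x_0)\|_E\to 0$ as $r\to 0^+$ is false in a general r.i.\ space: your own proof of it uses absolute continuity of the norm and $\phi_E(0^+)=0$, and both fail for non-order-continuous spaces --- in particular for $E=L^\infty$ and, more to the point, for the Marcinkiewicz spaces to which the paper actually applies this theorem (Theorem 4.1 uses the witness $x_0=\psi'$ in $M(\psi)$; one computes $\|T_{a,r}(\psi')\|_{M(\psi)}=\sup_{0<v\le 1}\frac{r\psi(v)}{\psi(rv)}$, which for $\psi(t)=t\log(e/t)$ equals $1$ for every $r$). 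Without norm decay the sums $\sum_k k\,\|T_{J_{i,k}}(x_0)\|_E$ cannot be made finite and $u_i=\sum_k T_{J_{i,k}}(k\,x_0)$ need not belong to $E$ (in $L^\infty$ it is an unbounded function), so the construction collapses exactly in the case the theorem is needed for. Your closing remark is the correct repair and is what the paper does: in every application $A$ is a union of linear subspaces, hence stable under all nonzero scalar multiples, and then one drops the multipliers, takes $u_i=T_{a_i,r_i}(x_0)$ on a dyadic partition, and the small-scalar issue disappears. In summary: your argument works for order-continuous $E$ (where it genuinely proves more than the paper's, since it needs no scalar stability of $A$) and for scalar-stable $A$, but as written it does not prove the theorem in the stated generality of an arbitrary r.i.\ space.
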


\begin{proof}
We only need to show that $E\setminus A$ is spaceable when it is non-empty. Given $x\in E\setminus A$, we consider the sequence of functions $x_n=T_{a_n,r_n}(x)$ for $a_n=1-\frac1{2^{n-1}}$ and $r_n=\frac1{2^n}$. The functions $x_n$ are linearly independent since they have disjoint supports, and $x_n\in E\setminus A$ for every $n\in\mathbb N$. Thus, since
	$$\left|\sum_{n=1}^\infty\lambda_nx_n\right|=\sum_{n=1}^\infty|\lambda_n|x_n\ge|\lambda_i|x_i$$
for every $i\in\mathbb N$, we obtain that
	$$\overline{[x_n:n\in\mathbb N]}\subset E\setminus A.$$
\end{proof}

The union of a collection of r.i. spaces contained in another r.i. space is a $T$-subset. Thus, using the previous result and the density in a separable r.i. space of the set formed by all simple functions, we have the following consequence:

\begin{cor}
Let $E$ be a separable r.i. space and $(E_i)_{i\in I}$ a collection of r.i. spaces contained in $E$. The following statements are equivalent:
\begin{enumerate}
\item$E\setminus\bigcup\limits_{i\in I}E_i$ is spaceable.
\item$E\setminus\bigcup\limits_{i\in I}E_i\neq\emptyset$.
\item$\bigcup\limits_{i\in I}E_i$ is not closed in $E$.
\end{enumerate}
\end{cor}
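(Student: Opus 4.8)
The plan is to read the corollary off Theorem~\ref1 together with two elementary facts about separable r.i.\ spaces, so that essentially no new work is required. Write $A=\bigcup_{i\in I}E_i$. The implication $(1)\Rightarrow(2)$ is trivial, since a spaceable set is in particular nonempty; thus the substance lies in $(2)\Rightarrow(1)$ and in $(2)\Leftrightarrow(3)$.

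For $(2)\Rightarrow(1)$ I would first verify that $A$ is a $T$-subset of $E$; this is the only place where the operators $T_{a,r}$ play a role. Condition~(1) in the definition of a $T$-subset is immediate: if $y\in E_i$ and $|x|\le|y|$ a.e., then $x\in E_i\subseteq A$ because $E_i$ satisfies axiom~(1) of an r.i.\ space. For condition~(2), assume $T_{a,r}(x)\in E_i$ and show $x\in E_i$. The key computation is that the distribution function of $T_{a,r}(x)$ is $\lambda_{T_{a,r}(x)}=r\,\lambda_x$, whence $\big(T_{a,r}(x)\big)^*=x^*(\,\cdot\,/r)\,\chi_{(0,r]}$. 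Two consequences follow: first, if $0<r'\le r$ then $\big(T_{a',r'}(x)\big)^*\le\big(T_{a,r}(x)\big)^*$ pointwise for every admissible $a'$, so $T_{a',r'}(x)\in E_i$ as well, since an r.i.\ space contains every function whose decreasing rearrangement is dominated by that of one of its elements (a short chain of axioms (1) and (2)); second, choosing an integer $n\ge 1/r$ and putting $a_j=(j-1)/n$, $r_j=1/n$ for $j=1,\dots,n$, the function $z=\sum_{j=1}^n T_{a_j,r_j}(x)$ lies in $E_i$ as a finite sum of such functions, its summands have pairwise disjoint supports, and $\lambda_z=\sum_{j=1}^n\tfrac1n\lambda_x=\lambda_x$. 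Hence $z$ and $x$ have the same distribution, so $x\in E_i\subseteq A$ by axiom~(2). Once $A$ is known to be a $T$-subset, Theorem~\ref1 yields that $E\setminus A$ is spaceable whenever it is nonempty, which is precisely $(2)\Rightarrow(1)$.

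For $(2)\Leftrightarrow(3)$ I would invoke the standard fact that every r.i.\ space on $[0,1]$ contains $L^\infty[0,1]$, so $A\supseteq L^\infty[0,1]$ contains all simple functions; since $E$ is separable, these are dense in $E$, and hence $A$ is dense in $E$. A dense subset of $E$ is closed if and only if it equals $E$, that is, if and only if $E\setminus A=\emptyset$; negating, $A$ is not closed if and only if $E\setminus A\neq\emptyset$, which is $(3)\Leftrightarrow(2)$. Combined with $(1)\Leftrightarrow(2)$ this closes the cycle of equivalences.

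The only genuinely nontrivial step I anticipate is the verification that $A$ is a $T$-subset, concretely that $T_{a,r}(x)\in E_i$ forces $x\in E_i$: this rests on the identity $\lambda_{T_{a,r}(x)}=r\,\lambda_x$ and on reconstructing, from a single compressed copy of $x$, a function with the same distribution as $x$ by tiling $[0,1]$ with $\lceil 1/r\rceil$ rescaled copies. Everything else is a direct appeal to Theorem~\ref1, to the two axioms of an r.i.\ space, and to the density of simple functions in a separable r.i.\ space.
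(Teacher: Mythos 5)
Your proposal is correct and follows the same route as the paper: verify that $\bigcup_{i\in I}E_i$ is a $T$-subset and apply Theorem~3.1 for $(1)\Leftrightarrow(2)$, then use $L^\infty\subseteq E_i$ and the density of simple functions in the separable space $E$ for $(2)\Leftrightarrow(3)$. The paper merely asserts the $T$-subset property without proof, whereas you supply the missing verification (via $\lambda_{T_{a,r}(x)}=r\lambda_x$ and the tiling of $[0,1]$ by compressed copies), which is a sound way to fill that gap.
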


The above corollary is related to the general sufficient condition for spaceability obtained by D. Kitson and R.M. Timoney in \cite[Theorem 3.3]{KT}. Next example shows that we need additional hypotheses to obtain an equivalence.

\begin{ex}
Let $1\le p<q<2$. In $L^p$ we can find a closed subspace $A$ isomorphic to $L^q$. If $(x_n)_{n\in\mathbb N}$ is a sequence in $L^p$ equivalent to the canonical basis of $\ell_p$, then $\overline{[x_n:n\in\mathbb N]}\cap A$ is a finite-dimensional closed subspace. Then $L^p\setminus A$ is spaceable.
\end{ex}

\section{Spaceability in Lorentz and Marcinkiewicz spaces}

Using Theorem \ref1, we can state an affirmative result on spaceability in some cases. First, we are going to study the problem for Marcinkiewicz spaces.

\begin{thm}
The set $M(\psi)\setminus\bigcup\limits_{\phi\ll\psi}M(\phi)$ is spaceable for every $\psi\in\Phi$.
\end{thm}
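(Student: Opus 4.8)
The plan is to apply Theorem \ref1 with $E=M(\psi)$ and $A=\bigcup_{\phi\ll\psi}M(\phi)$; note that the Corollary cannot be used here, since $M(\psi)$ need not be separable. So it suffices to verify the two hypotheses of that theorem: that $A$ is a $T$-subset of $M(\psi)$, and that $M(\psi)\setminus A\neq\emptyset$.

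For the first, I would check that $M(\phi)\subseteq M(\psi)$ whenever $\phi\ll\psi$. Indeed, $\lim_{t\to0}\phi(t)/\psi(t)=0$ forces $\phi\le\psi$ on some interval $(0,\varepsilon]$, while on $[\varepsilon,1]$ the ratio $\phi/\psi$ is bounded (both functions are continuous on $[\varepsilon,1]$ and $\psi\ge\psi(\varepsilon)>0$ there); hence $\phi\le C\psi$ on $[0,1]$ for a suitable constant $C$, and the stated inclusion criterion for Marcinkiewicz spaces gives $M(\phi)\inc M(\psi)$. Since each $M(\phi)$ is an r.i.\ space, $A$ is a union of r.i.\ spaces contained in $M(\psi)$, and therefore a $T$-subset by the remark following Theorem \ref1.

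For the second point---the heart of the matter---I would exhibit a single function in $M(\psi)$ that lies outside every $M(\phi)$ with $\phi\ll\psi$. Because $\psi\in\Phi$ is continuous, increasing and concave with $\psi(0)=0$, its a.e.\ derivative $\psi'$ is non-negative and non-increasing; setting $x=\psi'$ (any a.e.\ representative), one has $x^*=x$ a.e., and since a continuous concave function vanishing at $0$ is absolutely continuous on $[0,1]$, $\int_0^tx^*(s)\,ds=\int_0^t\psi'(s)\,ds=\psi(t)$ for every $t\in[0,1]$. Hence $\|x\|_{M(\psi)}=\sup_{0<t\le1}\psi(t)/\psi(t)=1$, so $x\in M(\psi)$; on the other hand, for any $\phi\ll\psi$ we get $\|x\|_{M(\phi)}=\sup_{0<t\le1}\psi(t)/\phi(t)=\infty$, since $\psi(t)/\phi(t)\to\infty$ as $t\to0$. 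Thus $x\in M(\psi)\setminus A$, and Theorem \ref1 yields that $M(\psi)\setminus A$ is spaceable.

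I do not expect a genuine obstacle here: the statement is essentially a direct consequence of Theorem \ref1 once the right test function is identified, namely the one whose decreasing rearrangement is the density $d\psi$. The only points requiring mild care are the inclusion $M(\phi)\subseteq M(\psi)$ for $\phi\ll\psi$ and the elementary facts that $\psi'$ is a bona fide decreasing rearrangement and that $\int_0^t\psi'=\psi(t)$.
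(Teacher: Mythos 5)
Your proposal is correct and follows essentially the same route as the paper: both apply Theorem \ref1 with the witness $x=\psi'$, whose Marcinkiewicz quotient is $\psi(t)/\phi(t)$, which is bounded for $\phi=\psi$ and unbounded for every $\phi\ll\psi$. You merely spell out two points the paper leaves implicit (that $\bigcup_{\phi\ll\psi}M(\phi)$ is a $T$-subset via the inclusion $\phi\le C\psi$, and that $\int_0^t\psi'=\psi(t)$), which is fine.
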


\begin{proof}
If $\psi\in\Phi$, then $\psi'\in M(\psi)$ and $\|\psi'\|_{M(\psi)}=1$. Now, given $\phi\in\Phi$ such that $\phi\ll\psi$, we have
	$$\sup_{0<t\le1}\frac{\int_0^t\psi'(s)\ ds}{\phi(t)}=\infty.$$
Therefore, $\psi'\not\in M(\phi)$. Using Theorem \ref1 we obtain the result.
\end{proof}

However, we do not obtain spaceability for Lorentz spaces.

\begin{thm}
The set $\Lambda(\psi)\setminus\bigcup\limits_{\psi\ll\phi}\Lambda(\phi)$ is empty for every $\psi\in\Phi$.
\end{thm}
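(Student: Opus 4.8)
The plan is to show that every function $x\in\Lambda(\psi)$ already lies in some Lorentz space $\Lambda(\phi)$ with $\psi\ll\phi$, so that the alleged exceptional set is empty. The natural candidate for $\phi$ should be built from the decreasing rearrangement $x^*$ of $x$. Recall that $x\in\Lambda(\psi)$ means $\int_0^1 x^*(t)\,d\psi(t)<\infty$, equivalently (after integration by parts, using $\psi(0)=0$ and that $\psi$ is concave hence $x^*\psi$ behaves well at the endpoints) $\int_0^1\psi(t)\,d(-x^*(t))<\infty$, or $\int_0^1 \psi(t)\,(-x^*)'(t)\,dt<\infty$ in the absolutely continuous case. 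The first step is therefore to define, for a fixed $x\in\Lambda(\psi)$, a function $\phi\in\Phi$ governing the convergence of this integral more tightly than $\psi$ does.

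First I would reduce to the case where $x^*$ is unbounded near $0$ (otherwise $x\in L^\infty\subset\Lambda(\phi)$ for every $\phi$, and one can easily exhibit a suitable $\phi$ with $\psi\ll\phi$ since $\Phi$ contains functions growing arbitrarily slowly near $0$, e.g. powers $t^{1/n}$, and $\psi\in\Phi$ is concave with $\psi(0)=0$). Then, with $g(t):=\int_0^1 x^*(u)\,d\psi(u)$ restricted to the tail, I would set $h(t)=\int_0^t x^*(u)\,d\psi(u)$, which is increasing, concave-ish and tends to $\|x\|_{\Lambda(\psi)}$ as $t\to1$ and to $0$ as $t\to0^+$. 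The key trick — standard in this circle of ideas — is to choose a concave function $\phi$ that vanishes at $0$, grows faster than $\psi$ near $0$ (so $\psi\ll\phi$), yet slowly enough that $\int_0^1 x^* \,d\phi$ still converges. Concretely, one takes $\phi$ whose derivative is $\psi'(t)\cdot\varepsilon(t)$ for a function $\varepsilon(t)\downarrow 0$ as $t\to0^+$ chosen so sparingly that $\int_0^1 x^*(t)\psi'(t)\varepsilon(t)\,dt<\infty$; such an $\varepsilon$ exists by the elementary lemma that for any finite measure $\mu$ (here $d\mu=x^*\psi'\,dt$) there is $\varepsilon\downarrow0$ with $\int \varepsilon^{-1}\,d\mu<\infty$ — wait, we want the opposite direction, so instead we use: for any finite measure $\mu$ there is $w\uparrow\infty$ with $\int w\,d\mu<\infty$; applying this with $w=1/\varepsilon$ gives what we need. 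One must then verify $\phi$ so constructed is (equivalent to) a member of $\Phi$ — increasing, concave, $\phi(0)=0$ — which may require replacing $\phi$ by its least concave majorant, harmless since that only enlarges $\Lambda(\phi)$ and preserves $\psi\ll\phi$.

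The main obstacle I anticipate is the concavity of the constructed $\phi$: multiplying $\psi'$ by a decreasing factor $\varepsilon(t)$ keeps $\phi'$ decreasing (good, that gives concavity) but one must also ensure $\phi$ is finite and that the condition $\psi\ll\phi$, i.e. $\lim_{t\to0}\psi(t)/\phi(t)=0$, genuinely follows from $\varepsilon(t)\to0$; this is where the integration-by-parts identities relating $\psi,\phi$ to their derivatives near $0$ must be handled carefully, since $\psi(t)=\int_0^t\psi'$ and $\phi(t)=\int_0^t\psi'\varepsilon$, so $\psi(t)/\phi(t)\to\infty$ rather than $0$ — meaning I have the inequality backwards and should instead take $\phi' = \psi'/\varepsilon$ with $\varepsilon\downarrow0$, accepting that then I need $\int_0^1 x^*\psi'/\varepsilon<\infty$, which forces $\varepsilon$ to go to $0$ slowly depending on how fast $x^*\psi'$ already decays. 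Resolving this tension correctly — choosing $\varepsilon$ from the finiteness of $\int_0^1 x^*\,d\psi$ via the standard "speed-up" lemma for integrable functions — is the crux; once the right $\phi$ is in hand, checking $\phi\in\Phi$, $\psi\ll\phi$, and $x\in\Lambda(\phi)$ is routine, and since $x$ was arbitrary the set $\Lambda(\psi)\setminus\bigcup_{\psi\ll\phi}\Lambda(\phi)$ is empty.
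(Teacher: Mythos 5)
Your proposal is correct and, after the self-correction, lands on exactly the paper's argument: pick a weight $y=1/\varepsilon$ tending to $\infty$ at $0$ with $\int_0^1 x^*\psi'\,y<\infty$ (the standard ``speed-up'' lemma for integrable functions), set $\phi(t)=\int_0^t\psi'(s)y(s)\,ds$, and note that $\phi$ is concave since $\psi'y$ is decreasing, $x\in\Lambda(\phi)$ by construction, and $\psi(t)/\phi(t)\le 1/y(t)\to0$. The steps you call routine are indeed the ones the paper carries out explicitly.
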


\begin{proof}
Let $\psi\in\Phi$ and $x\in\Lambda(\psi)$. There exists $\phi\in\Phi$ such that $\psi\ll\phi$ and $x\in\Lambda(\phi)$. Indeed, since $x^*\psi'\in L^1[0,1]$, there exists a function $y=y^*$ such that $\lim\limits_{t\to0}y(t)=\infty$ and
	$$\int_0^1x^*(s)\psi'(s)y(s)\ ds<\infty.$$
We take on $[0,1]$ the function
	$$\phi(t)=\int_0^t\psi'(s)y(s)\ ds.$$
The function $\phi\in\Phi\cap L^1[0,1]$, $x\in\Lambda(\phi)$ and
\begin{eqnarray*}
\lim_{t\to0}\frac{\psi(t)}{\phi(t)}&=&\lim_{t\to0}\frac{\int_0^t\psi'(s)\ ds}{\int_0^t\psi'(s)y(s)\ ds}\\&\le&\lim_{t\to0}\frac{\int_0^t\psi'(s)\ ds}{y(t)\int_0^t\psi'(s)\ ds}\\&=&\lim_{t\to0}\frac1{y(t)}\\&=&0.
\end{eqnarray*}
\end{proof}

\section{Spaceability in Orlicz spaces}

Neither do we obtain spaceability for Orlicz spaces.

\begin{thm}
The set $L^\psi\setminus\bigcup\limits_{\psi\prec\phi}L^\phi$ is empty for every $\psi\in\Psi$ verifying the $\Delta_2$-condition at $\infty$.
\end{thm}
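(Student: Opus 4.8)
The plan is to imitate the argument for Lorentz spaces: given an arbitrary $x\in L^\psi$ I shall construct an Orlicz function $\phi\in\Psi$ with $\psi\prec\phi$ and $x\in L^\phi$, which immediately yields $L^\psi\setminus\bigcup_{\psi\prec\phi}L^\phi=\emptyset$. Since the Orlicz spaces involved are vector spaces, replacing $x$ by an appropriate multiple we may assume $\int_0^1\psi(|x(t)|)\,dt\le1$, so that $g:=\psi\circ|x|$ belongs to $L^1[0,1]$.

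First I would produce, by a standard de la Vall\'ee--Poussin type argument, a convex increasing function $N\colon[0,\infty)\to[0,\infty)$ with $N(0)=0$, $\lim_{s\to\infty}N(s)/s=\infty$, satisfying the $\Delta_2$-condition at $\infty$, and with $\int_0^1 N(g(t))\,dt<\infty$. Concretely one picks $0=s_0<s_1<s_2<\cdots\to\infty$ with $s_{k+1}\ge4s_k$ and $\int_{\{g\ge s_k\}}g\,d\lambda\le2^{-k}$ for $k\ge1$, lets $h$ be the nondecreasing step function equal to $k+1$ on $[s_k,s_{k+1})$, and sets $N(t)=\int_0^t h(s)\,ds$. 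The inequality $N(t)\le t\,h(t)$ then gives $\int_0^1 N(g)\,d\lambda\le\int_0^1 g\,h(g)\,d\lambda\le 1+\sum_{k\ge1}(k+1)2^{-k}<\infty$, the bound $N(t)\ge(t/2)\,h(t/2)$ gives $N(t)/t\to\infty$, and comparing $N(2t)\le 2t\,h(2t)$ with $N(t)\ge(t/2)\,h(t/2)$ together with $s_{k+1}\ge4s_k$ gives the $\Delta_2$-condition for $N$.

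Next I would put $\phi:=N\circ\psi$. Being a composition of convex increasing functions it is convex and increasing, with $\phi(0)=N(\psi(0))=0$ and $\phi(t)\to\infty$, so $\phi\in\Psi$; and $\phi(2t)=N(\psi(2t))\le N(K\psi(t))\le c\,N(\psi(t))=c\,\phi(t)$ for large $t$ (using first $\psi\in\Delta_2$ and then $N\in\Delta_2$), so $\phi$ satisfies the $\Delta_2$-condition at $\infty$ as well. Since $\int_0^1\phi(|x(t)|)\,dt=\int_0^1 N(g(t))\,dt<\infty$, dominated convergence gives $\int_0^1\phi(|x(t)|/s)\,dt\le1$ for $s$ large, hence $x\in L^\phi$. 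Finally, for $\psi\prec\phi$: from $N(s)/s\to\infty$ and $\psi(t)\to\infty$ one gets $\phi(t)=N(\psi(t))\ge\psi(t)$ for all $t$ beyond some $T$, so $L^\phi\inc L^\psi$; moreover $\psi(t)/\phi(t)=\psi(t)/N(\psi(t))\to0$, so for each $C>0$ there is $T_C\ge1$ with $\psi(t)\le C\phi(t)$ for all $t\ge T_C$. Taking $n=1$, $x_1=T_C$ and $a_1=1$ in the characterization of DSS inclusions of separable Orlicz spaces (applicable since $\psi,\phi\in\Delta_2$ at $\infty$) --- note that $\psi(tx_1)\le C\phi(tx_1)$ for every $t\ge1$ --- shows that $L^\phi\inc L^\psi$ is DSS, i.e.\ $\psi\prec\phi$. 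Hence $x\in\bigcup_{\psi\prec\phi}L^\phi$, and since $x$ was arbitrary the set in the statement is empty.

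I expect the main obstacle to be securing the $\Delta_2$-condition at $\infty$ for $N$ (and hence for $\phi=N\circ\psi$) simultaneously with the de la Vall\'ee--Poussin integrability $\int_0^1 N(g)<\infty$ and the super-linear growth $N(s)/s\to\infty$; everything else reduces to direct verifications.
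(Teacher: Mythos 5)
Your proof is correct and is essentially the paper's argument: both improve the integrability of $\psi(|x(\cdot)|)$ by a de la Vall\'ee--Poussin construction and then deduce $\psi\prec\phi$ from $\psi(t)/\phi(t)\to0$ via the $n=1$ case of the DSS characterization. The only difference is packaging: the paper defines $\phi$ piecewise as $b_n\psi(t)$ on $(2^n,2^{n+1}]$ with $b_n\uparrow\infty$ chosen so that $\sum_n b_n\psi(2^n)\lambda(A_n)<\infty$, whereas your composition $\phi=N\circ\psi$ achieves the same slow divergence of $\phi/\psi$ and has the minor advantage that convexity of $\phi$ is automatic rather than needing a convexification of a piecewise-defined function.
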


\begin{proof}
Let $\psi\in\Psi$ verifying the $\Delta_2$-condition at $\infty$ and $x\in L^\psi$. There exists $\phi\in\Psi$ verifying the $\Delta_2$-condition at $\infty$ such that $\psi\prec\phi$ and $x\in L^\phi$. Indeed, we define the disjoint measurable sets
	$$A_n=\{t\in[0,1]:2^n\le|x(t)|<2^{n+1}\}$$
with $n\in\mathbb N$. Then
	$$\sum_{n=1}^{\infty}\psi(2^n)\mu(A_n)\le\sum_{n=1}^{\infty}\int_{A_n}\psi(|x(t)|)\ dt\le\sum_{n=1}^{\infty}\psi(2^{n+1})\mu(A_n).$$
Since $\psi$ verifies the $\Delta_2$-condition at $\infty$, we have $\sum\limits_{n=1}^{\infty}\psi(2^n)\mu(A_n)<\infty$ if and only if $\sum\limits_{n=1}^{\infty}\psi(2^{n+1})\mu(A_n)<\infty$. Now, we choose an increasing sequence of positive numbers $(b_n)_{n\in\mathbb N}$ such that $b_1=1$, $\lim\limits_{n\to\infty}b_n=\infty$, the series $\sum\limits_{n=1}^{\infty}b_n\psi(2^n)\mu(A_n)$ also converges and the sequence $\left(\frac{b_{n+1}}{b_n}\right)_{n\in\mathbb N}$ is bounded. Finally, we define the function
	$$\phi(t)=\left\{\begin{array}{ll}\psi(t)&\mbox{if }0\le t\le2,\\b_n\psi(t)&\mbox{if }2^n<t\le2^{n+1}\mbox{ with }n\in\mathbb N.\end{array}\right.$$
Clearly, the function $\phi$ verifies all the required conditions.
\end{proof}

However, an affirmative result can be obtained considering only a sequence of Orlicz spaces.

\begin{pro}
The set $L^\psi\setminus\bigcup\limits_{n=1}^\infty L^{\phi_n}$ is spaceable for every $\psi\in\Psi$ and $(\phi_n)_{n\in\mathbb N}\subset\Psi$ such that $L^{\phi_n}\inc L^\psi$ and $L^{\phi_n}\ne L^\psi$ for every $n\in\mathbb N$.
\end{pro}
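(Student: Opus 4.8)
The plan is to apply Theorem \ref1 with $E=L^\psi$ and $A=\bigcup_{n=1}^\infty L^{\phi_n}$. Since each $L^{\phi_n}$ is an r.i. space contained in $L^\psi$, the union $A$ is a $T$-subset of $L^\psi$, so by Theorem \ref1 it suffices to exhibit a single function $x\in L^\psi$ belonging to none of the spaces $L^{\phi_n}$. (Note that the corollary following Theorem \ref1 is not available here, because $\psi$ is not assumed to satisfy the $\Delta_2$-condition and hence $L^\psi$ may fail to be separable.)

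To build the witness I would disjointify. Since the inclusion $L^{\phi_n}\inc L^\psi$ is proper, fix for each $n$ a function $y_n\in L^\psi\setminus L^{\phi_n}$, and with $a_n=1-2^{-(n-1)}$ and $r_n=2^{-n}$ (as in the proof of Theorem \ref1) set $w_n=T_{a_n,r_n}(y_n)$. The supports of the $w_n$ are pairwise disjoint, being contained in the disjoint intervals $(a_n,a_n+r_n]$. A change of variables in the Orlicz modular gives, for every $s>0$ and every $\varphi\in\Psi$,
$$\int_0^1\varphi\left(\frac{|w_n(t)|}{s}\right)dt=r_n\int_0^1\varphi\left(\frac{|y_n(u)|}{s}\right)du,$$
so $w_n\in L^\varphi$ if and only if $y_n\in L^\varphi$. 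Taking $\varphi=\psi$ shows $w_n\in L^\psi$, and taking $\varphi=\phi_n$ shows $w_n\notin L^{\phi_n}$.

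Finally I would rescale and superpose. Since $w_n\in L^\psi$ and $\psi$ is convex with $\psi(0)=0$, the quantity $\int_0^1\psi(c\,|w_n(t)|)\,dt$ tends to $0$ as $c\to0^+$, so one can choose $c_n>0$ with $\int_0^1\psi(c_n|w_n(t)|)\,dt\le2^{-n}$. Put $x=\sum_{n=1}^\infty c_nw_n$; this is a well-defined measurable function because the summands have pairwise disjoint supports, and by disjointness $\int_0^1\psi(|x(t)|)\,dt=\sum_{n=1}^\infty\int_0^1\psi(c_n|w_n(t)|)\,dt\le1$, so $x\in L^\psi$. For each fixed $n$ we have $|c_nw_n|\le|x|$ pointwise while $c_nw_n\notin L^{\phi_n}$ (a nonzero multiple of $w_n$), so solidity of $L^{\phi_n}$ forces $x\notin L^{\phi_n}$. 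Hence $x\in L^\psi\setminus\bigcup_{n=1}^\infty L^{\phi_n}$, this set is nonempty, and Theorem \ref1 yields spaceability.

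The only genuinely delicate point is keeping the superposition $x$ inside $L^\psi$ in the absence of a $\Delta_2$-assumption on $\psi$; this is precisely what the modular estimates $\int_0^1\psi(c_n|w_n(t)|)\,dt\le2^{-n}$ together with the disjoint supports of the $w_n$ are for. The verification that $A$ is a $T$-subset and the boundedness-type properties of $T_{a_n,r_n}$ are already recorded in the paper, so those steps are routine; I expect the modular bookkeeping to be the main thing to get right.
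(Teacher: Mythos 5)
Your proposal is correct and follows essentially the same route as the paper: both arguments produce a single witness by placing functions of $L^\psi\setminus L^{\phi_n}$ on disjoint intervals, controlling the Orlicz modular of the superposition via convexity of $\psi$ (so no $\Delta_2$-condition is needed), using solidity to exclude $x$ from each $L^{\phi_n}$, and then invoking Theorem \ref1. The only difference is cosmetic: the paper directly asserts the existence of $x_n\in L^\psi(A_n)\setminus L^{\phi_n}(A_n)$ on a prescribed partition, whereas you justify it by transplanting global witnesses with $T_{a,r}$ and the change-of-variables identity for the modular.
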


\begin{proof}
Let $\{A_n:n\in\mathbb N\}$ be a partition of $[0,1]$ formed by disjoint intervals. There exists $x_n\in L^\psi(A_n)\setminus L^{\phi_n}(A_n)$ with $\|x_n\|_{L^\psi(A_n)}=1$ for every $n\in\mathbb N$. The function
	$$x=\sum_{n=1}^\infty\frac{x_n}{2^n}\chi_{A_n}$$
belongs to $L^\psi\setminus\bigcup\limits_{n=1}^\infty L^{\phi_n}$. Using Theorem \ref1 we have the result.
\end{proof}

With enough separation between the functions that generate the Orlicz spaces through using indices of such functions, we can obtain another affirmative result.

\begin{pro}
\label2
The set $L^\psi\setminus\bigcup\limits_{q_\psi^\infty<p_\phi^\infty}L^\phi$ is spaceable for every $\psi\in\Psi$ verifying the $\Delta_2$-condition at $\infty$.
\end{pro}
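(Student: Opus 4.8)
The plan is to reduce the statement to Theorem \ref{1} by exhibiting a single function in $L^\psi$ that escapes \emph{every} Orlicz space $L^\phi$ with $q_\psi^\infty<p_\phi^\infty$. The key observation is that the union $\bigcup_{q_\psi^\infty<p_\phi^\infty}L^\phi$ is a $T$-subset of $L^\psi$: each $L^\phi$ satisfies the ideal property, and applying an operator $T_{a,r}$ only dilates the domain (the distribution function is scaled), so membership in an Orlicz space is unaffected; hence by Theorem \ref{1} it suffices to produce one witness function. Set $q=q_\psi^\infty$. The candidate is a function $x$ whose decreasing rearrangement behaves near $0$ like $t^{-1/q}$ up to a slowly varying correction — concretely something like $x^*(t)=t^{-1/q}(\log(e/t))^{-\alpha}$ for a suitable $\alpha>1$ (or a dyadic analogue built on a partition, to mirror the constructions in the previous proofs). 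First I would check $x\in L^\psi$: since $\psi(s)/s^q$ is eventually decreasing, $\psi(x^*(t))\lesssim (x^*(t))^q$ up to constants for $t$ small, and with $\alpha>1$ the integral $\int_0^1\psi(x^*(t))\,dt$ converges (using the $\Delta_2$-condition to absorb the scaling constant coming from the norm), so $x\in L^\psi$ after rescaling.

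Next I would show $x\notin L^\phi$ whenever $p_\phi^\infty>q$. Fix such a $\phi$ and pick $p$ with $q<p<p_\phi^\infty$; then $\phi(s)/s^p$ is increasing for large $s$, i.e. $\phi(s)\gtrsim s^p$ for $s$ large, hence for any $c>0$,
\[
\int_0^1\phi\!\left(\frac{x^*(t)}{c}\right)dt\ \gtrsim\ \int_0^\delta \left(\frac{x^*(t)}{c}\right)^p dt\ \gtrsim\ c^{-p}\int_0^\delta t^{-p/q}(\log(e/t))^{-\alpha p}\,dt,
\]
and since $p/q>1$ the integrand is not integrable near $0$ regardless of the logarithmic factor, so the integral diverges for every $c$; therefore $\|x\|_{L^\phi}=\infty$ and $x\notin L^\phi$. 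This uses only the definition of $p_\phi^\infty$ and is uniform in $\phi$, which is exactly what is needed.

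The main obstacle is the bookkeeping at the two endpoints of the index comparison: the indices $p_\phi^\infty,q_\psi^\infty$ are defined by suprema/infima, so "$\psi(s)/s^q$ is eventually decreasing" need not hold \emph{at} $q=q_\psi^\infty$ itself but only for every exponent strictly larger — one must insert an auxiliary exponent $q'$ with $q<q'$ and tune $\alpha$ (or the dyadic weights) so that the $t^{-1/q'}$-type bound still gives convergence in $L^\psi$ while $t^{-1/q}$-type growth still gives divergence in $L^\phi$; the gap $p_\phi^\infty-q_\psi^\infty>0$ is what makes this possible, and the logarithmic (or slowly varying) correction is precisely the device that threads the needle. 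A secondary technical point is invoking the $\Delta_2$-condition correctly: it is needed both to pass between $\psi(x^*(t))$ and $\psi(x^*(t)/s)$ when estimating the Luxemburg norm and to guarantee, if one uses the dyadic construction, that the comparison $\sum \psi(2^n)\mu(A_n)<\infty \iff \sum\psi(2^{n+1})\mu(A_n)<\infty$ as in the proof of the previous theorem. Once these estimates are in place, Theorem \ref{1} delivers spaceability immediately.
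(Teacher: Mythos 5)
Your overall strategy (reduce to Theorem \ref{1} via the $T$-subset property and produce a single witness function) is exactly the paper's, but your candidate function and your proposed repair of the endpoint problem both fail. The bound $\psi(x^*(t))\lesssim (x^*(t))^{q}$ with $q=q_\psi^\infty$ is not available: the definition of $q_\psi^\infty$ only gives $\psi(s)\le C_{q'}s^{q'}$ for each $q'>q_\psi^\infty$, and for instance $\psi(s)=s^{q}\exp(\sqrt{\log s})$ has $q_\psi^\infty=q$ while $\psi(s)/s^{q}$ tends to infinity faster than any power of $\log s$; for such $\psi$ your function $x^*(t)=t^{-1/q}(\log(e/t))^{-\alpha}$ lies in $L^\psi$ for no $\alpha$. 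Your proposed fix --- insert an auxiliary exponent $q'>q$ --- cannot be carried out for a fixed function: you ask for a witness that is simultaneously of $t^{-1/q'}$-type from above (to get into $L^\psi$) and of $t^{-1/q}$-type from below (to escape every $L^\phi$), and since $t^{-1/q'}\le t^{-1/q}$ near $0$, no slowly varying correction can bridge a genuine power gap. Worse, a function of order $t^{-1/q'}$ belongs to $L^p$ for every $p<q'$, hence to $L^\phi$ whenever $q_\psi^\infty<p_\phi^\infty<q'$; since $p_\phi^\infty$ ranges over all of $(q_\psi^\infty,\infty)$, no single auxiliary exponent yields a witness.

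The paper resolves this by letting the auxiliary exponent vary: take $q_n\downarrow q_\psi^\infty$, so that the union equals $\bigcup_n\bigcup_{q_n<p_\phi^\infty}L^\phi$, and for each $n$ build a disjointly supported $x_n(t)=(t-2^{-n-1})^{-1/q_n}\chi_{(2^{-n-1},2^{-n}]}(t)$. Each $x_n$ lies in $L^\psi$ (choose $q\in(q_\psi^\infty,q_n)$ with $\psi(s)\le Cs^{q}$ for large $s$, so the modular is controlled by $\int(t-2^{-n-1})^{-q/q_n}\,dt<\infty$) and escapes every $L^\phi$ with $p_\phi^\infty>q_n$ (since $\phi(s)\ge C's^{q_n}$ for large $s$ and $\int(t-2^{-n-1})^{-1}\,dt=\infty$). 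The normalized sum $x=\sum_n2^{-n}x_n$ is then a single witness: any $\phi$ with $p_\phi^\infty>q_\psi^\infty$ satisfies $p_\phi^\infty>q_n$ for some $n$, and $|x|\ge2^{-n}|x_n|$ together with the ideal property gives $x\notin L^\phi$. A genuinely single-function construction is also possible, but the profile must be defined through $\psi$ itself, e.g.\ $x^*(t)=\psi^{-1}\bigl(t^{-1}(\log(e/t))^{-2}\bigr)$: membership in $L^\psi$ is then automatic, and for each $\phi$ one chooses $q'$ strictly between $q_\psi^\infty$ and $p_\phi^\infty$ \emph{after} $\phi$ is given, obtaining $x^*(t)\gtrsim t^{-1/q'}(\log(e/t))^{-2/q'}$ and divergence of the $L^\phi$ modular. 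Either way, the missing idea is that the exponent used in the estimates must depend on $\phi$ (or on $n$), which your fixed-power candidate does not allow.
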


\begin{proof}
Let $\psi\in\Psi$ and a decreasing sequence $q_n\downarrow q_\psi^\infty$. Therefore,
	$$\bigcup_{q_\psi^\infty<p_\phi^\infty}L^\phi=\bigcup_{n=1}^\infty\bigcup_{
q_n<p_\phi^\infty}L^\phi.$$

We define the function
	$$x_n(t)=\frac1{(t-2^{-n-1})^{1/q_n}}\chi_{(2^{-n-1},2^{-n}]}(t)$$
for every $n\in\mathbb N\cup\{0\}$.

There exist $q\in\left[q_\psi^\infty,q_n\right)$ and a constant $C$ such that $\psi(t)\le Ct^q$ for every $t\ge1$. Then
	$$\int_0^1\psi(|x_n(t)|)\ dt\le C\int_{2^{-n-1}}^{2^{-n}}\frac1{(t-2^{-n-1})^{q/q_n}}\ dt<\infty.$$

On the other hand, given $\phi\in\Psi$ such that $q_n<p_\phi^\infty$, there exists a constant $C'$ such that $\phi(t)\ge C't^{q_n}$ for every $t\ge1$. Then, for $a>0$ we have
	$$\int_0^1\phi\left(\frac{|x_n(t)|}a\right)\ dt\ge\frac{C'}{a^{q_n}}\int_{2^{-n-1}}^{t_a}\frac1{t-2^{-n-1}}\ dt=\infty$$
being $t_a$ such that $x_n(t_a)>a$.

Assuming $\|x_n\|_{L^\psi}=1$ for every $n\in\mathbb N$, the function
	$$x=\sum_{n=1}^\infty\frac1{2^n}x_n$$
belongs to $L^\psi\setminus\bigcup\limits_{q_\psi^\infty<p_\phi^\infty}L^\phi$. Using Theorem \ref1 we obtain the result.
\end{proof}

\begin{rem}
{\em If $\psi(t)=t^p$ with $p\ge1$, then $p_\psi^\infty=q_\psi^\infty=p$. Thus, Proposition \ref2 is a kind of generalization for Orlicz spaces of the original result in \cite{BFDS} for $L^p$-spaces.}
\end{rem}

Spaceability in Orlicz spaces has been also studied recently in \cite{AM}.

\section{Spaceability in Nakano spaces}

A characterization of disjointly strictly singular inclusion operators between Nakano function spaces is still unknown. Also, Nakano function spaces are not r.i. spaces. However, using techniques in previous sections, we settle for obtaining a kind of generalization of the original result in \cite{BFDS} for $L^p$-spaces assuming different hypothesis.

By \cite[Proposition 3.9]{HRu2}, the equality $L^{p(\cdot)}=L^{q(\cdot)}$ holds if and only if $p=q$ a.e. Thus,
	$$L^{p(\cdot)}=\bigcup_{q\ge p,\,q\ne p}L^{q(\cdot)}.$$
Indeed, let $f\in L^{p(\cdot)}$. We define the measurable sets $A_n=\{t\in[0,1]:|f(t)|>n\}$ with $n\in\mathbb N$. Since $\lim\limits_{n\to\infty}\lambda(A_n)=0$, there exists $n_0\in\mathbb N$ such that $\lambda(A_{n_0})<1$. If $q(t)=\chi_{A_{n_0}}(t)p(t)+\chi_{[0,1]\setminus A_{n_0}}(t)(p(t)+1)$, then $f\in L^{q(\cdot)}$.

So, we will need to add an extra condition to prove our result for Nakano function spaces.

First, we are going to consider weighted Nakano sequence spaces.

\begin{pro}
\label3
Let two sequences $(p_n)_{n\in\mathbb N}\subset[1,\infty)$ and $(w_n)_{n\in\mathbb N}\subset(0,\infty)$ such that $\lim\limits_{n\to\infty}p_n=p$ and $\sum\limits_{n=1}^\infty w_n<\infty$.
\begin{enumerate}
\item The set $\ell_p(w_n)\setminus\bigcup\limits_{q>p}\ell_q(w_n)$ is spaceable for every $p\ge1$.
\item The set $\ell_{(p_n)}(w_n)\setminus\bigcup\limits_{q_n>p_n,\,\liminf\limits_{n\to\infty}q_n>p}\ell_{(q_n)}(w_n)$ is spaceable for every $p\ge1$.
\end{enumerate}
\end{pro}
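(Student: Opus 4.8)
The plan is to produce, for each of the two sets, an explicit element and then invoke the disjoint-supports trick exactly as in the proof of Theorem \ref{1}; the only genuinely new work is constructing a vector that lies in the ambient space but in none of the smaller spaces, and checking that the weighted sequence spaces admit the same ``$T$-type'' invariance (namely, truncating or erasing a finite set of coordinates, or passing to a block on a new set of coordinates, does not take us out of the relevant complement). Since $\sum_n w_n<\infty$, all the $\ell_{(q_n)}(w_n)$ in sight contain the canonical unit vectors $e_k$ and in fact every finitely supported sequence, so membership in $\ell_{(q_n)}(w_n)$ is governed purely by the tail behaviour of the coordinates; this is what makes a single cleverly chosen vector split into infinitely many disjointly supported copies.

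For part (1), I would take a partition $\mathbb N=\bigsqcup_{m=1}^\infty N_m$ into infinitely many infinite blocks and, on each block $N_m=\{n^{(m)}_1<n^{(m)}_2<\cdots\}$, place a sequence $z^{(m)}$ supported on $N_m$ chosen so that $z^{(m)}\in\ell_p(w_n)$ but $z^{(m)}\notin\ell_q(w_n)$ for every $q>p$. Concretely, on the $j$-th coordinate of $N_m$ put a value $c_j$ with $\sum_j c_j^p w_{n^{(m)}_j}<\infty$ yet $\sum_j c_j^q w_{n^{(m)}_j}=\infty$ for all $q>p$; such $c_j$ exist because on a fixed summable weight $(w_{n_j})$ one can solve $c_j^p w_{n_j}=1/j^{1+\varepsilon_j}$ with $\varepsilon_j\downarrow 0$ slowly, which keeps the $p$-series convergent while making the $q$-series diverge for every $q>p$ (the ratio $c_j^q/c_j^p=c_j^{q-p}\to\infty$ along a subsequence, pushing the terms away from summability). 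Rescale so $\|z^{(m)}\|=1$ and set $z=\sum_m 2^{-m}z^{(m)}$; then $z\in\ell_p(w_n)$, while for each $q>p$ the restriction of $z$ to $N_1$ already witnesses $z\notin\ell_q(w_n)$. Now the functions $z_k$ obtained by keeping only the blocks $N_k,N_{k+1},\dots$ of $z$ (or, cleaner, the disjointly supported pieces $2^{-m}z^{(m)}$ themselves) are linearly independent with disjoint supports, each lies in the complement $\ell_p(w_n)\setminus\bigcup_{q>p}\ell_q(w_n)$ because the ``bad'' block is present in each, and $|\sum_k\lambda_k z_k|\ge|\lambda_i|\,|z_i|$ pointwise, so any nonzero element of $\overline{[z_k:k\in\mathbb N]}$ dominates a nonzero multiple of some $z_i$ and hence fails to be in any $\ell_q(w_n)$, $q>p$; by the lattice property of $\ell_{(q_n)}(w_n)$ this forces it out of all the forbidden spaces. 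That closed span is the desired infinite-dimensional subspace.

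For part (2), the set of forbidden spaces is $\bigcup\{\ell_{(q_n)}(w_n):q_n>p_n\ \forall n,\ \liminf_n q_n>p\}$. As in Proposition \ref{2}, write this as a countable union: fix a sequence $r_k\downarrow p$ and note that any admissible $(q_n)$ has $q_n>r_k$ eventually for some $k$, so it suffices to avoid, for each $k$, the union of all $\ell_{(q_n)}(w_n)$ with $\liminf q_n\ge r_k$. Since $p_n\to p$, for large $n$ we have $p_n<r_k$, so a tail sequence behaving like $\ell_{r_k}$-type data will sit in $\ell_{(p_n)}(w_n)$ but escape every such space: mimic part (1) with exponent $r_k$ on a block $N_k$, choosing coordinates $c_j$ with $\sum_j c_j^{p_n}w_{n_j}<\infty$ (easy, as $p_n<r_k$ eventually and the weights are summable) but $\sum_j c_j^{s}w_{n_j}=\infty$ for every $s>r_k$, hence for any $(q_n)$ with $\liminf q_n>r_k$ since then $q_n\ge s>r_k$ eventually. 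Summing normalized pieces $2^{-k}z^{(k)}$ over the blocks gives one vector in the complement, and the same disjoint-supports argument yields spaceability.

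The main obstacle I anticipate is the coordinate-level construction in the weighted setting: one must choose the $c_j$ so that a \emph{single} $p$-series converges while \emph{all} $q$-series with $q>p$ (respectively all $s$-series with $s>r_k$) diverge, \emph{and} do so against a prescribed summable weight $(w_{n_j})$ rather than the constant weight. The clean way around this is to first thin the block $N_m$ so that the relevant partial sums of $(w_n)$ decay geometrically, then put $c_j=(w_{n_j})^{-1/p}\,j^{-1/p}\big(\log(j+1)\big)^{-2/p}$ or similar, so that $c_j^p w_{n_j}=j^{-1}\log^{-2}(j+1)$ (summable) while $c_j^q w_{n_j}=w_{n_j}^{1-q/p}\,j^{-q/p}\log^{-2q/p}(j+1)$, and since $w_{n_j}\to 0$ the factor $w_{n_j}^{1-q/p}\to\infty$ for $q>p$ fast enough (after the thinning) to defeat summability. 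Verifying this divergence uniformly over all $q>p$ (it suffices to check it for $q$ arbitrarily close to $p$, by monotonicity of the sets $\ell_q(w_n)$) is the one place a little care is needed; everything else is the now-standard block-and-lattice-domination argument underlying Theorem \ref{1}.
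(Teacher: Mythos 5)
Your proposal is correct and follows essentially the same route as the paper: produce, on each of infinitely many disjoint infinite subsets of $\mathbb N$, a normalized element of the complement, and conclude with the disjoint-support/lattice-domination argument of Theorem~\ref{1}. The only difference is in the micro-construction of each such element: the paper uses a further partition $\{\Omega_{m,n}\}_{m}$ together with a sequence $q_m\downarrow p$, one sub-block defeating each $\ell_{q_m}(w_n)$, whereas you build one block defeating every $q>p$ at once via the explicit $c_j=w_{n_j}^{-1/p}j^{-1/p}\log^{-2/p}(j+1)$ after thinning so that $w_{n_j}\le 2^{-j}$ --- which also makes explicit the existence claim the paper leaves implicit.
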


\begin{proof}
We will prove only the first part of the proposition. The proof of the second part is analogous.

Let $\{\Omega_{m,n}:m,n\in\mathbb N\}$ be a disjoint partition of $\mathbb N$ formed by sets whose cardinality is the same as $\mathbb N$. We also consider a decreasing sequence $(q_m)_{m\in\mathbb N}$ convergent to $p$.

Let $n\in\mathbb N$. For every $m\in\mathbb N$ there exists $x_{m,n}=\sum\limits_{k\in\Omega_{m,n}}a_ke_k$ such that $\|x_{m,n}\|_{\ell_p(w_n)}=1$ and $\sum\limits_{k\in\Omega_{m,n}}|a_k|^{q_m}w_k=\infty$. Thus,
	$$x_n=\sum_{m=1}^\infty\frac{x_{m,n}}{2^m}\in\ell_p(w_n).$$
However, $x_n\not\in\ell_q(w_n)$ for every $q>p$. Indeed, there exists $m_0\in\mathbb N$ such that $q>q_{m_0}>p$. Then
	$$\sum_{k\in\Omega_{m_0,n}}|a_k|^qw_k=\infty.$$

Reasoning as in the proof of Theorem \ref1 with the disjoint sequence $(x_n)_{n\in\mathbb N}$ we end the proof.
\end{proof}

Using the previous proposition we obtain our result for Nakano function spaces.

\begin{thm}
If $r\in R_{p(\cdot)}$, then the set
	$$L^{p(\cdot)}\setminus\bigcup_{q\ge p,\,r\not\in R_{q(\cdot)}}L^{q(\cdot)}$$
is spaceable.
\end{thm}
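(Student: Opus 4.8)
The plan is to reduce the problem to Proposition \ref{3} about weighted Nakano sequence spaces by slicing $[0,1]$ into a disjoint sequence of intervals and transplanting a sequence-space example onto each slice. Since $r\in R_{p(\cdot)}$, for every $\epsilon>0$ the set $p^{-1}(r-\epsilon,r+\epsilon)$ has positive measure; I would fix a decreasing sequence $\epsilon_m\downarrow 0$ and, inside a fixed interval $I\subset[0,1]$ on which $p$ takes values arbitrarily close to $r$, choose a countable disjoint family of positive-measure pieces on which $p(t)$ is squeezed near $r$. On such a piece the local Nakano space behaves, up to the usual discretization of the measure, like a weighted $\ell_{(p_n)}$ space with exponents tending to $r$ and summable weights (the weights being the measures of a further dyadic-type subdivision), so the $\liminf$-type example from part (2) of Proposition \ref{3} produces a function in $L^{p(\cdot)}$ restricted to that piece which fails to lie in $L^{q(\cdot)}$ for any $q$ with $r\notin R_{q(\cdot)}$ — because $r\notin R_{q(\cdot)}$ forces $q$ to stay bounded away from $r$ on a subset of full measure of the piece, giving a local exponent gap that the example exploits.

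Concretely, the first step is: fix $f_0$ a function realizing this local bad behaviour on one interval, normalized in $L^{p(\cdot)}$; the key point is that $f_0\in L^{p(\cdot)}$ but for every admissible $q$ (i.e. $q\ge p$ with $r\notin R_{q(\cdot)}$) we have $\rho_{q(\cdot)}(f_0/a)=\infty$ for all $a>0$, hence $f_0\notin L^{q(\cdot)}$. The second step is to observe that the set $L^{p(\cdot)}\setminus\bigcup_{q\ge p,\,r\notin R_{q(\cdot)}}L^{q(\cdot)}$ is closed under pointwise domination in the sense needed, and then to run the disjoint-support argument exactly as in the proof of Theorem \ref{1}: put $a_n=1-2^{-(n-1)}$, $r_n=2^{-n}$, and set $x_n$ to be the translated-rescaled copy of $f_0$ supported on $(a_n,a_n+r_n]$; since $r\in R_{p(\cdot)}$ still holds after restricting attention to any subinterval (or one can choose the original interval inside the first slice and repeat the construction slice by slice with exponents re-read on each slice), each $x_n$ again lies in $L^{p(\cdot)}$ and escapes every admissible $L^{q(\cdot)}$. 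The functions $x_n$ have disjoint supports, hence are linearly independent, and for any scalars $(\lambda_n)$ with $\sum\lambda_n x_n\in L^{p(\cdot)}$ one has $|\sum\lambda_n x_n|=\sum|\lambda_n|x_n\ge|\lambda_i|x_i$, so $\sum\lambda_n x_n\notin L^{q(\cdot)}$ for every admissible $q$; therefore $\overline{[x_n:n\in\mathbb N]}\subset L^{p(\cdot)}\setminus\bigcup L^{q(\cdot)}$, which is infinite-dimensional and closed, proving spaceability.

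The main obstacle, and the step deserving the most care, is the first one: manufacturing the single function $f_0$ that is in $L^{p(\cdot)}$ yet simultaneously outside $L^{q(\cdot)}$ for \emph{every} exponent $q\ge p$ with $r\notin R_{q(\cdot)}$. The subtlety is that $q$ ranges over an uncountable family, so one cannot simply diagonalize against a fixed sequence of target exponents. The resolution is that $r\notin R_{q(\cdot)}$ is equivalent to the existence of some $\delta>0$ with $\lambda(q^{-1}(r-\delta,r+\delta))=0$; combined with $q\ge p$ and the fact that $p$ is close to $r$ on the chosen piece, this yields that on a full-measure subset of the piece $q(t)\ge r+\delta$ (the case $q(t)\le r-\delta$ being excluded by $q\ge p$ once $p>r-\delta$ there). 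Then, just as the integrability of $x_n$ in Proposition \ref{3}(2) used an exponent strictly above the critical value to force a divergent integral of the form $\int (t-c)^{-1}\,dt$, here one designs $f_0$ with a singularity tuned precisely to the critical exponent $r$ so that any uniform upward perturbation $q(t)\ge r+\delta$ makes the modular infinite. Making this uniform over all $q$ — i.e. checking that the \emph{same} $f_0$ works no matter how small the unknown $\delta$ is — is where the weighted-sequence-space reformulation pays off, since part (2) of Proposition \ref{3} is exactly engineered to handle exponents approaching the critical value along an unspecified subsequence; transporting that construction faithfully, with the intervals of the partition playing the role of the index sets $\Omega_{m,n}$ and their Lebesgue measures playing the role of the summable weights $w_n$, is the technical heart of the argument.
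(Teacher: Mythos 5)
Your core reduction is the same as the paper's and is sound: use $r\in R_{p(\cdot)}$ to extract disjoint positive-measure sets $B_n\subset p^{-1}\left(r-\frac1n,r+\frac1n\right)$, note that membership of $\sum a_n\chi_{B_n}$ in $L^{p(\cdot)}$ is governed by a weighted Nakano sequence space $\ell_{(p_n)}(\lambda(B_n))$ with $p_n\to r$, and observe that $q\ge p$ together with $r\notin R_{q(\cdot)}$ forces $q(t)\ge r+\delta$ a.e.\ on the tail of the $B_n$'s, so that every admissible $q$ is dominated from below by a sequence with $\liminf$ strictly above $r$ --- which is exactly what part (2) of Proposition \ref3 is built to defeat, uniformly over the uncountable family of $q$'s. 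Up to this point you are reproving the paper's argument.

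The gap is in how you manufacture the infinite disjoint sequence needed for spaceability. You propose to take translated--rescaled copies $x_n=T_{a_n,r_n}(f_0)$ on the dyadic intervals $(a_n,a_n+r_n]$, as in Theorem \ref1. But that theorem is stated for r.i.\ spaces, and the paper explicitly points out that Nakano function spaces are not rearrangement invariant: $T_{a,r}$ moves the support of $f_0$ to a region where the exponent $p$ may be nowhere near $r$, and then the copy loses the property of escaping the admissible $L^{q(\cdot)}$'s. Your claim that ``$r\in R_{p(\cdot)}$ still holds after restricting attention to any subinterval'' is false: take $p\equiv r$ on $[0,\frac12]$ and $p\equiv r+5$ on $(\frac12,1]$, and $q\equiv r+1$ on $[0,\frac12]$, $q\equiv r+5$ on $(\frac12,1]$. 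Then $q\ge p$ and $r\notin R_{q(\cdot)}=\{r+1,r+5\}$, yet every $f\in L^{p(\cdot)}$ supported in $(\frac12,1]$ lies in $L^{q(\cdot)}$ (both memberships reduce to $L^{r+5}$ there), so no function supported in your slices $(a_n,a_n+r_n]$ with $n\ge2$ can belong to the target set, and your closed span degenerates. The repair is the one implicit in the paper: do not partition $[0,1]$ into intervals, but partition the index set of the $B_n$'s into countably many infinite subfamilies $\Omega_{m,n}$ (as in the proof of Proposition \ref3) and support each $x_n$ on $\bigcup_{m}\bigcup_{k\in\Omega_{m,n}}B_k$; disjointness is then automatic, every $x_n$ stays inside the region where $p$ is squeezed near $r$, and the lattice inequality $\left|\sum\lambda_nx_n\right|\ge|\lambda_i|x_i$ finishes the argument exactly as you intended.
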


\begin{proof}
Let $r\in R_{p(\cdot)}$. By \cite[Proposition 5.1 and Proposition 4.2.a]{HRu2}, there exist a sequence $(p_n)_{n\in\mathbb N}$ and a sequence of disjoint measurable sets $(B_n)_{n\in\mathbb N}$ such that $\lim\limits_{n\to\infty}p_n=r$ and $B_n\subset p^{-1}\left(r-\frac1n,r+\frac1n\right)$. And the function $\sum\limits_{n=1}^\infty a_n\chi_{B_n}$ belongs to $L^{p(\cdot)}$ if and only if $\sum\limits_{n=1}^\infty|a_n|^{p_n}\lambda(B_n)<\infty$.

Now, let $q$ such that $q\ge p$ and $r\not\in R_{q(\cdot)}$. There exists $n_0\in\mathbb N$ verifying $q(t)>r+\frac1{n_0}$ for every $t\in\bigcup\limits_{n>n_0}B_n$. Then
	$$\rho_{q(\cdot)}\left(\sum_{n=1}^\infty a_n\chi_{B_n}\right)\ge\sum_{n>n_0}|a_n|^{r+\frac1{n_0}}\lambda(B_n).$$

Using the second part of Proposition \ref3 we obtain the result.
\end{proof}

\noindent{\bf Open questions.} There are still some unsolved problems related to the content of this paper. The first one could be studying the algebrability of the sets considered. We recall that the set $M$ is said to be {\em algebrable} if $M\cup\{0\}$ contains an infinitely generated algebra.

On the other hand, a characterization of disjointly strictly singular inclusion operators between Nakano function spaces is still unknown. In other case, spaceability in Nakano functions spaces could be studied in the same way as has been done for Lorentz, Marcinkiewicz and Orlicz spaces.

\noindent{\bf Acknowledgement.} The authors wish to thank E.M. Semenov for his valuable suggestions.


\begin{thebibliography}{99}
\bibitem{AM}I. Akbarbaglu and S. Maghsoudi, {\em Large structures in certain subsets of Orlicz spaces}, Linear Algebra Appl. 438 (2013), 4363-4373.
\bibitem{AGPS}R.M. Aron, F.J. Garc\'\i a-Pacheco, D. P\'erez-Garc\'\i a and J.B. Seoane-Sep\'ulveda, {\em On dense-lineability of sets of functions on $\mathbb R$}, Topology 48 (2009), 149-156.
\bibitem{AGS}R.M. Aron, V.I. Gurariy and J.B. Seoane-Sep\'ulveda, {\em Lineability and spaceability of sets of functions on $\mathbb R$}, Proc. Amer. Math. Soc. 133 (2005), 795-803.
\bibitem{APS}R.M. Aron, D. P\'erez-Garc\'\i a and J.B. Seoane-Sep\'ulveda, {\em Algebrability of the set of non-convergent Fourier series}, Studia Math. 175 (2006), 83-90.
\bibitem{AS}R.M. Aron and J.B. Seoane-Sep\'ulveda, {\em Algebrability of the set of everywhere surjective functions on $\mathbb C$}, Bull. Belg. Math. Soc. Simon Stevin 14 (2007), 25-31.
\bibitem{A}S.V. Astashkin, {\em Disjoint strict singularity of embeddings of symmetric spaces}, Mat. Zametki 65 (1999), 3-14 (in Russian). Translation: Math. Notes 65 (1999), 3-12.
\bibitem{BG}P. Bandyopadhyay and G. Godefroy, {\em Linear structures in the set of norm-attaining functionals on a Banach space}, J. Convex Anal. 13 (2006), 489-497.
\bibitem{BS}C. Bennett and R. Sharpley, {\em Interpolation of operators}, Academic Press (1988).
\bibitem{BO}L. Bernal-Gonz\'alez and M. Ord\'o\~nez, {\em Spaceability of strict order integrability}, J. Math. Anal. Appl. 385 (2012), 303-309.
\bibitem{BPS}L. Bernal-Gonz\'alez, D. Pellegrino and J.B. Seoane-Sep\'ulveda, {\em Linear subsets of nonlinear sets in topological vector spaces}, Bull. Amer. Math. Soc. 51 (2014), 71-130.
\bibitem{BCFPS}G. Botelho, D. Cariello, V.V. F\'avaro, D. Pellegrino and J.B. Seoane-Sep\'ulveda, {\em Distinguished subspaces of $L_p$ of maximal dimension}, Studia Math. 215 (2013), 261-280.
\bibitem{BFDS}G. Botelho, V.V. F\'avaro, D. Pellegrino and J.B. Seoane-Sep\'ulveda, {\em $L_p[0,1]\setminus\bigcup\limits_{q>p}L_q[0,1]$ is spaceable for every $p>0$}, Linear Algebra Appl. 436 (2012), 2963-2965.
\bibitem{CS}D. Cariello and J.B. Seoane-Sep\'ulveda, {\em Basic sequences and spaceability in $\ell_p$ spaces}, J. Funct. Anal. 266 (2014), 3797-3814.
\bibitem{DHHR}L. Diening, P. Harjulehto, P. H\"ast\"o and M. Ru\v zi\v cka, {\em Lebesgue and Sobolev spaces with variable exponents}, Lecture Notes in Mathematics 2017, Springer (2011).
\bibitem{EGS}P.H. Enflo, V.I. Gurariy and J.B. Seoane-Sep\'ulveda, {\em Some results and open questions on spaceability in function spaces}, Trans. Amer. Math. Soc. 366 (2014), 611-625.
\bibitem{FGK}V.P. Fonf, V.I. Gurariy and M.I. Kadets, {\em An infinite dimensional subspace of $C[0,1]$ consisting of nowhere differentiable functions}, C. R. Acad. Bulgare Sci. 52 (1999), 13-16.
\bibitem{GMSS}J.L. G\'amez-Merino, G.A. Mu\~noz-Fern\'andez, V.M. S\'anchez and J. B. Seoane-Sep\'ulveda, {\em Sierpi\'nski-Zygmund functions and other problems on lineability}, Proc. Amer. Math. Soc. 138 (2010), 3863-3876.
\bibitem{G1}V.I. Gurariy, {\em Subspaces and bases in spaces of continuous functions}, Dokl. Akad. Nauk SSSR 167 (1966), 971-973 (in Russian).
\bibitem{G2}V.I. Gurariy, {\em Linear spaces composed of nondifferentiable functions}, C. R. Acad. Bulgare Sci. 44 (1991), 13-16.
\bibitem{GQ}V.I. Gurariy and L. Quarta, {\em On lineability of sets of continuous functions}, J. Math. Anal. Appl. 294 (2004), 62-72.
\bibitem{Hen}S. Hencl, {\em Isometrical embeddings of separable Banach spaces into the set of nowhere approximatively differentiable and nowhere H\"older functions}, Proc. Amer. Math. Soc. 128 (2000), 3505-3511.
\bibitem{Her}F.L. Hern\'andez, {\em Disjointly strictly-singular operators in Banach lattices}, 18th Winter Sch. on Abstract Analysis (Srn\'\i, 1990), Acta Univ. Carolin.-Math. Phys. 31 (1990), 35-40.
\bibitem{HRo1}F.L. Hern\'andez and B. Rodr\'\i guez-Salinas, {\em On $\ell^p$-complemented copies in Orlicz spaces II}, Israel J. Math. 68 (1989), 27-55.
\bibitem{HRo2}F.L. Hern\'andez and B. Rodr\'\i guez-Salinas, {\em Lattice embedding $L^p$ into Orlicz spaces}, Israel J. Math. 90 (1995), 167-88.
\bibitem{HRu1}F.L. Hern\'andez and C. Ruiz, {\em$\ell_q$-structure of variable exponent spaces}, J. Math. Anal. Appl. 389 (2012), 899-907.
\bibitem{HRu2}F.L. Hern\'andez and C. Ruiz, {\em Averaging and orthogonal operators on variable exponent spaces $L^{p(\cdot)}(\Omega)$}, J. Math. Anal. Appl. 413 (2014), 139-153.
\bibitem{KT}D. Kitson and R.M. Timoney, {\em Operator ranges and spaceability}, J. Math. Anal. Appl. 378 (2011), 680-686.
\bibitem{KPS}S.G. Kre\u\i n, Ju.I. Petunin and E.M. Semenov, {\em Interpolation of linear operators}, A.M.S. (1982).
\bibitem{LM}B. Levin and D. Milman, {\em On linear sets in the space $C$ consisting of functions of bounded variation}, Zapiski Inst. Mat. Mekh. Kharkov 16 (1940), 102-105 (in Russian, English summary).
\bibitem{LT}J. Lindenstrauss and L. Tzafriri, {\em Classical Banach spaces II, Function spaces}, Springer (1979).
\bibitem{M}J. Musielak, {\em Orlicz Spaces and Modular Spaces}, Lecture Notes in Mathematics 1034, Springer (1983).
\bibitem{Na}H. Nakano, {\em Modulared sequence spaces}, Proc. Japan Acad. 27 (1951), 411-415.
\bibitem{Ne}A. Nekvinda, {\em Embeddings between discrete weighted Lebesgue spaces with variable exponents}, Math. Inequal. Appl. 10 (2007), 165-172.
\bibitem{R}L. Rodr\'\i guez-Piazza, {\em Every separable Banach space is isometric to a space of continuous nowhere differentiable functions}, Proc. Amer. Math. Soc. 123 (1995), 3649-3654.
\end{thebibliography}
\end{document}